\def\spine{1.1in}
\def\cl{\text{\rm cl}}
\def\V{\mathcal V}
\def\supp{\text{\rm supp}\,}
\def\epsilon{\varepsilon}
\def\restr#1{{\lower3.5pt\hbox{$\big|_{#1} $}}}
\newtheorem{theorem}{Theorem}
\newtheorem*{theorem*}{Theorem}
\newtheorem{lemma}[theorem]{Lemma}
\newtheorem{proposition}[theorem]{Proposition}
\newtheorem*{proposition*}{Proposition}
\newtheorem{corollary}[theorem]{Corollary}
\title{Discreteness of the minimizers of weakly repulsive\\ interaction energies on Riemannian manifolds}
\author{Oleksandr Vlasiuk}
\begin{document}
\maketitle
\begin{abstract}
    It is shown that the supports of measures minimizing weakly repulsive energies on Riemannian manifolds with sectional curvature bounded below do not have concentration points. This extends the results of Bj\"orck and Carrillo, Figalli, and Patacchini for such energies on the Euclidean space, and complements the results about the discreteness of minimizers of the geodesic Riesz energy on the sphere by Bilyk, Dai, and Matzke.
\end{abstract}
\bigskip

\section{Introduction} 
We are interested in characterizing measures on which the minimal value of the functional
\begin{equation}
    \label{eq:energy}
    I_F(\mu) = \iint_\Omega F(\rho(x,y)) \, d\mu(x) d\mu(y) \qquad \text{ s.t. } \mu\in\mathcal P(\Omega)
\end{equation}
is attained; here $ \Omega $ stands for a metric measure space with distance $ \rho $, and $ I_F $ is defined on the set of Borel probability measures on $ \Omega $, denoted by $ \mathcal P(\Omega) $. Expression \eqref{eq:energy} is the self-interaction energy of the measure $ \mu $. It  can be used to model a large collection of pairwise interacting agents, in which the features of individual agents are irrelevant: asteroid clouds, flocks of birds, particles in a suspension, etc.   
Carrillo, Figalli, and Patacchini \cite{carrilloGeometry2017} have established that when $ \Omega = \mathbb R^d $ with the Euclidean distance and the kernel $ F $ satisfies
\[
    F'(t) t^{1-\alpha} \to -C, \qquad t\to 0+, \quad \alpha > 2,
\]
with $ F(t) \geq 0, t\geq R $ for a fixed positive $ R $, then the local minimizers of $ I_F $ in $ d_\infty $ topology are discrete. In the present note we relax the assumptions on $ F(t) $ compared to \cite{carrilloGeometry2017}, and show that the local minimizers do not have concentration points also when $ \Omega $ is a more general Riemannian manifold. Namely, we show that if $ F(t) = -(1+o(1))\,Ct^{\alpha} $ and $ \Omega $ is a Riemannian manifold with sectional curvature bounded below, then the supports of local minimizers of $ I_F $ do not have concentration points on $ \Omega $.

For a function $ F:[0,+\infty)\to [0,+\infty] $ and a metric measure space $ \Omega $ with distance function $ \rho(\cdot\,, \cdot\,) $, we define the of value of the {\it interaction energy} $ I_F(\mu) $ on  a Borel probability measure $ \mu\in\mathcal P(\Omega) $ by \eqref{eq:energy} with $ \rho(x,y) $ denoting the distance between $ x,y\in\Omega $.  
If $ F $ is decreasing in a neighborhood of zero, we will call the energy $ I_F(\mu) $ {\it repulsive}, to reflect that the optimization of $ I_F(\mu) $ results in measures for which the set of pairwise distances,
\[ 
    \{ \rho(x,y): x,y\in\supp \mu \},
\]
favors larger values.
In other words, the minimizing measure tends to have supports with distance sets avoiding large values of $ F$. The stronger the repulsion at the origin, the more minimizing measures will avoid forming clusters. Let us illustrate this trend in the case $ \Omega = \mathbb S^d $, where Bilyk, Dai, and Matzke \cite{bilykStolarsky2016,bilykGeodesic2016} obtain the following characterization of the measures that minimize $ I_{F_\delta} $ with $ F_\delta(t) = - \text{sgn\,}\delta\,\cdot t^\delta $.

\begin{figure}[t]
    \centering
    \includegraphics[width=0.3\linewidth]{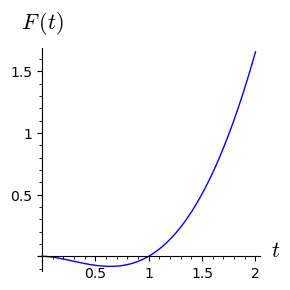}
    \hfil
    \includegraphics[width=0.3\linewidth]{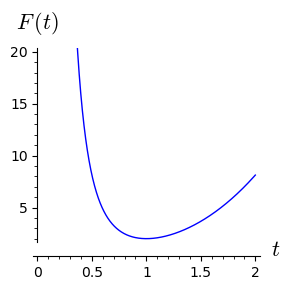}
    \caption{Examples of profiles of radial attractive-repulsive kernels. Left: weakly repulsive; right: strongly repulsive kernel.}%
    \label{fig:profiles}
\end{figure}

\begin{theorem}[\cite{bilykGeodesic2016,bilykStolarsky2016}]
    \label{thm:bilyk}
    Let $ \Omega = \mathbb S^d $ with $ \rho $ the geodesic distance, then for the functional
    \[
        I_{F_\delta}(\mu) = \iint_{\mathbb S^d} -{\rm sgn\,}\delta\, \cdot (\rho(x,y))^\delta \, d\mu(x) d\mu(y) \qquad \text{ \rm s.t. }  \mu \in\mathcal P(\mathbb S^d)
    \]
    there holds:\\
    (i) $-d < \delta < 1,\ \delta \neq 0$: the unique minimizer is the uniform measure on $ \mathbb S^d $; \\
    (ii) $\delta = 1$: any centrally symmetric measure is a minimizer; \\
    (iii) $\delta>1$: any measure of the form $ \mu = (\delta_x + \delta_{-x})/2 $ with $ x\in \mathbb S^d $ is a minimizer.
\end{theorem}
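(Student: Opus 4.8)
The plan is to pass to the harmonic analysis of the sphere and recast each case as a statement about the signs of the ultraspherical (Gegenbauer) coefficients of the zonal kernel $t\mapsto(\arccos t)^\delta$, where $t=\langle x,y\rangle=\cos\rho(x,y)$. Writing $\lambda=(d-1)/2$, I would expand
\[
(\arccos t)^\delta=\sum_{n\ge0}c_n\,J_n(\delta)\,G_n(t),\qquad J_n(\delta)=\int_0^\pi\theta^\delta\,C_n^\lambda(\cos\theta)\,\sin^{2\lambda}\theta\,d\theta,
\]
where $c_n>0$ are normalizing constants and the $G_n$ are the zonal functions normalized by the addition formula, $G_n(\langle x,y\rangle)=\sum_k Y_{n,k}(x)Y_{n,k}(y)$ for an orthonormal basis $\{Y_{n,k}\}$ of degree-$n$ spherical harmonics; in particular $\mu\mapsto\iint G_n\,d\mu\,d\mu=\|\widehat\mu_n\|^2\ge0$ is the squared norm of the degree-$n$ projection of $\mu$. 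Decomposing $\mu\in\mathcal P(\mathbb S^d)$ as $\mu=\sigma+\nu$ with $\sigma$ uniform and $\nu(\mathbb S^d)=0$, and using that $\int F_\delta(\rho(x,y))\,d\sigma(x)$ is independent of $y$ so that the cross terms vanish, I obtain
\[
I_{F_\delta}(\mu)=I_{F_\delta}(\sigma)+\sum_{n\ge1}a_n(\delta)\,\|\widehat\nu_n\|^2,\qquad a_n(\delta)=-\mathrm{sgn}(\delta)\,c_n\,J_n(\delta),
\]
so that everything reduces to controlling the signs of $J_n(\delta)$.

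First I would isolate the analytic core as a lemma: for every integer $n\ge1$ one has $\mathrm{sgn}\,J_n(\delta)=-\mathrm{sgn}(\delta)$ throughout $\delta\in(-d,1)\setminus\{0\}$, hence $a_n(\delta)>0$ there; at $\delta=1$ the reflection $\arccos(-t)=\pi-\arccos t$ makes the even part of $\arccos t$ constant, so $J_n(1)=0$ for even $n\ge2$ while $J_n(1)<0$ for odd $n$; and for $\delta>1$ the even coefficients reverse, $J_n(\delta)>0$ for even $n\ge2$. The identity I would build on folds the integral about $\theta=\pi/2$,
\[
J_n(\delta)=\int_0^{\pi/2}\bigl(\theta^\delta+(-1)^n(\pi-\theta)^\delta\bigr)\,C_n^\lambda(\cos\theta)\,\sin^{2\lambda}\theta\,d\theta,
\]
using $C_n^\lambda(-u)=(-1)^nC_n^\lambda(u)$. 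The parity statements at $\delta=1$ are then immediate, since for even $n$ the bracket collapses to the constant $\pi$ and $\int_0^{\pi/2}C_n^\lambda(\cos\theta)\sin^{2\lambda}\theta\,d\theta=0$ by the symmetry of the integrand; the sign for general $\delta$ comes from pairing the monotonicity of $\theta\mapsto\theta^\delta+(-1)^n(\pi-\theta)^\delta$ against the oscillation of the Gegenbauer polynomial, or, equivalently, from the explicit Jacobi expansion of the power. Establishing these signs simultaneously for all $n$ is the delicate analytic point.

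Granting the lemma, (i) and (ii) follow at once. For $-d<\delta<0$ and for $0<\delta<1$ all $a_n(\delta)>0$, so $I_{F_\delta}(\mu)\ge I_{F_\delta}(\sigma)$ with equality iff every $\widehat\nu_n=0$, i.e. $\mu=\sigma$; this is (i), and the restriction $\delta>-d$ is exactly what keeps $I_{F_\delta}(\sigma)$ finite. For $\delta=1$ the even coefficients vanish, so $I_{F_1}(\mu)=I_{F_1}(\sigma)+\sum_{n\ \mathrm{odd}}a_n\|\widehat\nu_n\|^2\ge I_{F_1}(\sigma)$ with equality iff the odd-degree projections of $\mu$ vanish, which is precisely central symmetry; as every centrally symmetric probability measure is admissible, this is (ii).

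Case (iii) is the main obstacle and needs a different tool, since for $\delta>1$ some even $a_n(\delta)<0$ and $\sigma$ is no longer a minimizer. Here I would run a Delsarte-type linear programming bound: minimizing $I_{F_\delta}$ is maximizing $\iint(\arccos t)^\delta\,d\mu\,d\mu$, so it suffices to produce a majorant $h(t)\ge(\arccos t)^\delta$ on $[-1,1]$ whose Gegenbauer coefficients of degree $\ge1$ are all nonpositive; then
\[
\iint(\arccos t)^\delta\,d\mu\,d\mu\le\iint h\,d\mu\,d\mu\le h_0,
\]
and one checks the chain is an equality for $\mu=(\delta_x+\delta_{-x})/2$, whose value is $\tfrac12\pi^\delta$. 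The clean candidate $h(t)=\tfrac{\pi^\delta}{2}(1-t)$ has only a negative degree-one coefficient, touches $(\arccos t)^\delta$ at $t=\pm1$, and dominates it for $\delta\ge2$; the genuinely delicate range is $1<\delta<2$, where this linear majorant fails near $t=1$ and a sharper majorant, still with nonpositive higher coefficients and tangent to the kernel at $t=\pm1$, must be constructed. Producing such a majorant valid for all $\delta>1$ and verifying its coefficient signs is where I expect the real work to lie.
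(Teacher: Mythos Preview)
The paper does not contain a proof of this theorem: it is quoted from \cite{bilykGeodesic2016,bilykStolarsky2016} as background to motivate the phase transition at $\delta=1$, and no argument for any of (i)--(iii) is given in the present paper. There is therefore no ``paper's own proof'' to compare against.

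That said, your outline is faithful to the approach actually used in the cited references: expand the zonal kernel $(\arccos t)^\delta$ in Gegenbauer polynomials, reduce (i)--(ii) to strict positivity (resp.\ nonnegativity with vanishing even part) of the coefficients $a_n(\delta)$, and treat (iii) separately since the kernel is no longer conditionally positive definite. You are also candid about where the real content lies. Two remarks. First, the sign lemma for $J_n(\delta)$ across all $n\ge1$ and all $\delta\in(-d,1)$ is the substance of the Bilyk--Dai argument; your folding identity is a correct starting point, but ``pairing the monotonicity of the bracket against the oscillation of $C_n^\lambda$'' is not by itself a proof, and this is the step that in the references requires a genuine computation (e.g.\ via an explicit Jacobi expansion or an integration-by-parts reduction). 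Second, for (iii) your diagnosis is accurate: the linear majorant $h(t)=\tfrac{\pi^\delta}{2}(1-t)$ fails near $t=1$ when $1<\delta<2$ because $(\arccos t)^\delta\sim 2^{\delta/2}(1-t)^{\delta/2}\gg 1-t$ there, so a Delsarte bound with this $h$ does not close. The actual proof in \cite{bilykGeodesic2016} does not proceed via a single polynomial majorant for all $\delta>1$; it uses instead a direct argument based on the structure of the kernel. So your plan for (iii) would need a different idea to cover $1<\delta<2$.
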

\noindent In the above statement we take into account that for positive exponents $ \delta $, the problem of minimizing the energy with the kernel $ F(t) = t^\delta $ is trivial, and consider instead the maximizers of the corresponding energy.  Observe that the kernels with sufficiently small $ \delta $ result in strongly repulsive energies, which are minimized on the uniform measure; in particular, the strong repulsion prevents clustering in the minimizer. In this note, we are concerned with the weakly repulsive energies, and the phenomenon of clustering for their minimizers, which corresponds to the case (iii) in the above proposition.

When $ \Omega = \mathbb R^d $, for the minimizing measures to be well-defined, it is necessary to consider kernels which confine the minimizers to a compact set. This can be achieved by assuming that $ F(t)\to\infty, t\to \infty $. Typical profiles of such {\it attractive-repulsive} kernels are shown in Figure~\ref{fig:profiles}.  
In the present note the only global assumptions on $ F $ is continuity, and that the particular local minimizers considered below are well-defined. The main local assumption is that
\begin{equation}
    \label{eq:weakly_rep}
    F(t) = -(C+o(1))\,t^{\alpha}, \qquad t \to 0+, \quad \alpha>2, \ C >0,
\end{equation}
that is, $ \lim_{t\to 0+} F(t)t^{-\alpha} = -C$ for some $ C>0 $ and $ \alpha > 2 $. We refer to the energies $ I_F $ with $ F $ satisfying the above equality as {\it weakly repulsive}. The behavior of such energies should be contrasted with what we will call here {\it strongly repulsive}, or singular, energies, for which \eqref{eq:weakly_rep} takes the form
\[
    F(t) = (C+o(1))\,t^{\alpha}, \qquad t \to 0+, \quad \alpha<0, \ C >0.
\]
Note that for strongly repulsive kernels, we need to additionally assume integrability on $ \Omega $.
As seen in point {\it (i)}\/ of the above theorem, the minimizers of strongly repulsive energies are generally not discrete.

Our main Theorem~\ref{thm:main} below implies the following statement. Unlike Theorem~\ref{thm:bilyk}, the kernel does not have to be given by a power law; the resulting characterization of the minimizers is however less precise.
\begin{proposition}
    Let the continuous kernel $F:[0,\infty) \to [0, \infty) $ be decreasing in a neighborhood of zero and satisfy \eqref{eq:weakly_rep}, and let $ \rho $ be the geodesic distance on $ \mathbb S^d $. If $ \widetilde \mu $ is a local minimizer of the functional
    \[
        I_F(\mu) = \iint_{\mathbb S^d} F(\rho(x,y)) \, d\mu(x) d\mu(y) \qquad \text{ \rm s.t. }  \mu \in\mathcal P(\mathbb S^d)
    \]
    in the $ d_\infty $ topology, then $ \supp \widetilde \mu $ is finite.
\end{proposition}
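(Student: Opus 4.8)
The plan is to read this off from the paper's main result (the forthcoming Theorem~\ref{thm:main}), whose conclusion is precisely that the support of a $d_\infty$-local minimizer carries no concentration points on a Riemannian manifold with sectional curvature bounded below, and then to upgrade ``no concentration points'' to ``finite'' using compactness of the sphere. First I would check that the hypotheses of the general result genuinely transfer to the present setting. The round sphere $\mathbb{S}^d$ with its geodesic distance $\rho$ is a complete Riemannian manifold of constant sectional curvature, so in particular its sectional curvature is bounded below (by any crude constant), and the curvature hypothesis is met trivially. The kernel $F$ is assumed continuous, decreasing near the origin, and to satisfy the weakly repulsive asymptotics \eqref{eq:weakly_rep} with $\alpha>2$, matching the local assumption of the main theorem. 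Moreover, since $\mathbb{S}^d$ is compact with diameter $\pi$, the distances $\rho(x,y)$ range over a bounded interval; the behavior of $F$ away from zero therefore plays no role beyond continuity, and no confining condition at infinity is needed for the local minimizer $\widetilde\mu$ to be well-defined, in contrast to the Euclidean case.

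Granting Theorem~\ref{thm:main}, the support $\supp\widetilde\mu$ has no concentration point, i.e.\ no point of $\mathbb{S}^d$ is an accumulation point of $\supp\widetilde\mu$; equivalently every point of the support is isolated. The concluding step is then a compactness argument. The support of any Borel measure is closed, so $\supp\widetilde\mu$ is a closed discrete subset of the compact metric space $\mathbb{S}^d$. If $\supp\widetilde\mu$ were infinite, compactness would produce an accumulation point $x_0\in\mathbb{S}^d$; since $\supp\widetilde\mu$ is closed we would have $x_0\in\supp\widetilde\mu$, and every neighborhood of $x_0$ would meet the support in infinitely many points, exhibiting $x_0$ as a concentration point and contradicting the main theorem. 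Hence $\supp\widetilde\mu$ is finite.

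I expect essentially all of the genuine difficulty to reside in the proof of Theorem~\ref{thm:main} rather than in this deduction: the delicate point for the corollary is only to confirm that the notion of ``local minimizer in the $d_\infty$ topology'' and the curvature and kernel hypotheses apply verbatim to $\mathbb{S}^d$. The compactness reduction from ``no concentration points'' to ``finite support'' is what is special to the sphere, and indeed is the reason the general theorem can only assert the absence of concentration points: on a noncompact manifold a discrete support may still be infinite. Were the main theorem unavailable, one would instead have to establish the no-concentration-point property directly, by showing that a local minimizer with mass accumulating in an arbitrarily small geodesic ball admits an energy-decreasing rearrangement, the exponent condition $\alpha>2$ being exactly what makes the relevant local variation strictly favorable toward spreading; but since the paper supplies Theorem~\ref{thm:main}, I would invoke it and restrict attention to verifying its hypotheses and carrying out the compactness step above.
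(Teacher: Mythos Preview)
Your proposal is correct and matches the paper's approach exactly: the paper states this proposition as an immediate consequence of Theorem~\ref{thm:main}, and the compactness step you spell out is precisely the content of the Corollary following that theorem (where boundedness of sectional curvature on a compact manifold is used to conclude finiteness of the support). There is nothing to add.
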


\section{Main result}

Recall the definition of Wasserstein $ d_\infty $ distance between measures $ \mu , \nu \in \mathcal P(\Omega) $:
\[
    d_\infty(\mu,\nu) = \inf \left\{ \sup_{(x,y)\in\supp \pi} \rho(x,y) : \pi \in \Pi(\mu,\nu) \right\},
\]
where $ \Pi(\mu,\nu) $ is the set of probability measures on $ \Omega \times\Omega $, with the first and second marginals equal to $ \mu $ and $ \nu $, respectively. Observe that convergence in $ d_\infty $ distance implies convergence in any $ d_p $ distance, $ 1\leq p <\infty $, and in the weak$ ^* $ topology. The converse does not hold; in particular, the function $ \gamma:[0,1] \to \mathcal P(\Omega) $, given by
\begin{equation}
    \label{eq:curve}
    \gamma(t) = (1-t)\mu_1 + t\mu_2,
\end{equation}
is not a continuous curve in $ \mathcal P(\Omega) $ with the $ d_\infty $ distance, whereas it is in $ \mathcal P(\Omega) $ equipped with the weak$^*$ topology. Furthermore, since $ d_\infty $ induces the strongest topology of those mentioned above, a measure minimizing $ I_F $ locally in  the weak$ ^* $ or $ d_p $ topologies for finite $ p $ is also minimizing it locally in the $ d_\infty $ topology.

We will need the following almost standard fact \cite{Bjorck1956,carrilloGeometry2017}; it can be viewed as a variational analog of the positivity of the second derivative at a point of local minimum. 
In the lemma, we refer to a set of signed Borel measures $ \V_0 $ as {\it 0-symmetric}, if the inclusion  $ \nu \in \V_0 $ implies $ -\nu\in\V_0 $. We also write 
\[
    \mathbb R \V_0 := \{ \lambda \nu : \lambda \in \mathbb R,\ \nu \in \mathcal V_0 \},
\] 
and denote by $ \cl^* (\mathbb R \V_0) $ its closure in the weak$ ^* $ topology.  
\begin{lemma}
    \label{lem:quadratic}
    Let $ F:[0,\infty) \to [0, \infty) $ be continuous and $ \Omega $ be a metric measure space. Assume that $ \widetilde \mu $ is a minimizer of $ I_F $ on the set $ \widetilde \mu + \V_0 \subset\mathcal P(\Omega) $, where $ \V_0 $ is a 0-symmetric set of signed Borel measures $ \nu $ with $ \nu(\Omega) =0 $.
    Then for any signed $ \nu \in \cl^*(\mathbb R \V_0) $ there holds
    \begin{equation}
        \label{eq:2nd_order}
        \iint_\Omega F(\rho(x,y)) \, d\nu(x) d\nu(y) \geq 0.
    \end{equation}
\end{lemma}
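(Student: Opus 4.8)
The plan is to regard $ I_F $ as the restriction to the diagonal of the symmetric bilinear form
\[
    B(\sigma,\tau) := \iint_\Omega F(\rho(x,y))\,d\sigma(x)\,d\tau(y),
\]
so that $ I_F(\mu) = B(\mu,\mu) $ and, since $ \rho $ is symmetric, $ B(\sigma,\tau) = B(\tau,\sigma) $; this symmetry is exactly what makes the first-order terms cancel. First I would fix $ \nu\in\V_0 $ and use that $ \V_0 $ is $ 0 $-symmetric, so that both $ \widetilde\mu+\nu $ and $ \widetilde\mu-\nu $ lie in $ \widetilde\mu+\V_0\subset\P(\Omega) $. Expanding the quadratic gives
\[
    I_F(\widetilde\mu\pm\nu) = I_F(\widetilde\mu) \pm 2B(\widetilde\mu,\nu) + B(\nu,\nu),
\]
and minimality of $ \widetilde\mu $ on $ \widetilde\mu+\V_0 $ yields the two inequalities $ \pm 2B(\widetilde\mu,\nu) + B(\nu,\nu)\geq 0 $. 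Adding them cancels the linear term and leaves $ B(\nu,\nu)\geq 0 $, the variational analog of a nonnegative second derivative. Homogeneity in each slot, $ B(\lambda\nu,\lambda\nu)=\lambda^2 B(\nu,\nu) $, then upgrades this at once to every $ \nu\in\R\V_0 $.

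The remaining step, and the genuinely delicate one, is to pass from $ \R\V_0 $ to its weak$^*$ closure. Given $ \nu\in\cl^*(\R\V_0) $, I would pick $ \nu_n\in\R\V_0 $ with $ \nu_n\to\nu $ weak$^*$; I know $ B(\nu_n,\nu_n)\geq 0 $ and want to conclude $ B(\nu,\nu)\geq 0 $. This requires \emph{continuity} of $ \sigma\mapsto B(\sigma,\sigma) $ along the sequence, not merely lower semicontinuity (which would point the wrong way). The mechanism I would use is that $ (x,y)\mapsto F(\rho(x,y)) $ is continuous and, on the relevant support, bounded, hence uniformly approximable by finite sums $ \sum_k a_k(x)b_k(y) $ of products of continuous functions; weak$^*$ convergence handles each product through $ \int a_k\,d\nu_n\to\int a_k\,d\nu $ and $ \int b_k\,d\nu_n\to\int b_k\,d\nu $, while the uniform approximation error is absorbed by a uniform bound on the total variations $ \|\nu_n\| $. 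This gives $ \nu_n\otimes\nu_n\to\nu\otimes\nu $ tested against $ F\circ\rho $, so $ B(\nu_n,\nu_n)\to B(\nu,\nu)\geq 0 $.

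The hard part will be precisely this weak$^*$ continuity of the quadratic form: it rests on $ F\circ\rho $ being bounded and continuous on the support of the measures in play, on a uniform total-variation bound for the approximating sequence (available here because the elements of $ \V_0 $ are differences of probability measures, hence of total variation at most $ 2 $, before scaling), and on the finiteness of the integrals $ I_F(\nu) $ throughout. On a noncompact $ \Omega $, or for an $ F $ unbounded at infinity, one must additionally guarantee tightness so that no mass escapes in the limit, which is what effectively confines the argument to perturbations supported in a fixed compact set; verifying these integrability and tightness conditions is the main technical obstacle, whereas the perturbation identity and the cancellation of the linear term are routine.
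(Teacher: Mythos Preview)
Your proposal is correct and follows essentially the same route as the paper: expand $I_F(\widetilde\mu\pm\nu)$, use $0$-symmetry of $\V_0$ to eliminate the linear term, extend by homogeneity to $\R\V_0$, then pass to the weak$^*$ closure by continuity of $I_F$. If anything, you are more careful than the paper about the last step---the paper simply asserts that continuity of $F$ makes $I_F$ weak$^*$ continuous, whereas you correctly flag that this requires a uniform total-variation bound on the approximating sequence and boundedness of $F\circ\rho$ on the relevant support.
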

\begin{proof}
    Pick a $ \nu \in \V_0$. Since $ \widetilde\mu $ is a minimizer on  $ \widetilde \mu + \V_0 $, $ I_F(\widetilde\mu) \leq I_F(\widetilde\mu\pm\nu)$, that is,
    \[
        \iint_\Omega F(\rho(x,y)) \, d\widetilde \mu(x) d\widetilde \mu(y) \leq \iint_\Omega F(\rho(x,y)) \, d(\widetilde \mu\pm\nu)(x)\,d(\widetilde \mu\pm\nu)(y). 
    \]
    Distributing the integral gives
    \[
        \pm2\iint_\Omega F(\rho(x,y)) \, d\widetilde \mu(x) d\nu(y)  + \iint_\Omega F(\rho(x,y)) \, d\nu(x) d\nu(y) \geq 0.
    \]
    Note that the first term in the above equation is linear in $ \nu $, the second term is quadratic. The inequality must hold for both measures $ \widetilde \mu \pm \nu $, so we conclude that $ \iint F(\rho(x,y)) \, d\nu(x) d\nu(y) \geq 0 $, since otherwise changing $ \nu $ to $ -\nu $ would make the left-hand side negative. This gives \eqref{eq:2nd_order} for $ \nu \in \V_0 $.

    Note further that if \eqref{eq:2nd_order} holds for a measure $ \nu $, it also holds for $ \lambda \nu $, with any $ \lambda \in \mathbb R $; this gives \eqref{eq:2nd_order} for $ \nu \in \mathbb R\V_0 $. Finally, the continuity of $ F $ implies that the functional $ I_F $ is continuous in the weak$ ^* $ topology, from which \eqref{eq:2nd_order} follows for the weak$ ^* $ closure of $ \mathbb R\V_0 $.  
\end{proof}
\begin{corollary}
    \label{cor:apply_lem}
    If $ \widetilde \mu $ is a local  minimizer of $ I_F $ in a $ d_\infty $-ball in $ \mathcal P(\Omega) $ of radius $ \epsilon $ around $ \widetilde \mu $, then for any signed $ \nu $ with $ \nu(\Omega) = 0 $ and $ \supp \nu \subset \supp \widetilde\mu $, satisfying $ \supp \nu \subset B(x_0,\epsilon) $ for some $ x_0\in\Omega $, inequality \eqref{eq:2nd_order} holds.
\end{corollary}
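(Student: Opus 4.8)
The plan is to exhibit the prescribed $\nu$ as an element of $\cl^*(\mathbb R\V_0)$ for a suitable $0$-symmetric family $\V_0$ of admissible perturbations and then invoke Lemma~\ref{lem:quadratic}. I would let $\V_0$ be the set of all signed measures $\eta$ with $\eta(\Omega)=0$ and $\supp\eta\subset B(x_0,\epsilon)\cap\supp\widetilde\mu$ for which both $\widetilde\mu+\eta$ and $\widetilde\mu-\eta$ are probability measures lying in the $d_\infty$-ball of radius $\epsilon$ around $\widetilde\mu$ from the hypothesis. This $\V_0$ is $0$-symmetric by construction, contains $0$, and local minimality of $\widetilde\mu$ gives $I_F(\widetilde\mu)\le I_F(\widetilde\mu\pm\eta)$ for every $\eta\in\V_0$; thus Lemma~\ref{lem:quadratic} applies and \eqref{eq:2nd_order} holds throughout $\cl^*(\mathbb R\V_0)$. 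It remains only to check that the given $\nu$ lies in this weak$^*$ closure.

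The obstacle is that $\nu$ need not itself be an admissible direction: even a small multiple $s\nu$ may fail to be nonnegative, since $\supp\nu\subset\supp\widetilde\mu$ does not force $\nu\ll\widetilde\mu$---the measure $\nu$ can charge a set that $\widetilde\mu$ ignores while still sitting inside $\supp\widetilde\mu$. I would circumvent this by approximating $\nu$ in the weak$^*$ topology by perturbations that are absolutely continuous with respect to $\widetilde\mu$ with bounded density. Write the Jordan decomposition $\nu=\nu^+-\nu^-$, with $\nu^+(\Omega)=\nu^-(\Omega)$ since $\nu(\Omega)=0$, and set $K:=\supp\nu$, a compact subset of the open ball $B(x_0,\epsilon)$ that lies at a positive distance $d_0$ from its boundary and, by hypothesis, is contained in $\supp\widetilde\mu$. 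Fix $r<d_0$ and a finite Borel partition $K=\bigsqcup_iK_i$ with $\operatorname{diam}K_i<r$; in each $K_i$ choose a point $x_i$, which automatically lies in $\supp\widetilde\mu$, so that $\widetilde\mu(B(x_i,r))>0$. Replacing each piece $\nu^\pm\restr{K_i}$ by the $\widetilde\mu$-average $\nu^\pm(K_i)\,\widetilde\mu(B(x_i,r))^{-1}\,\widetilde\mu\restr{B(x_i,r)}$ of equal mass and summing over $i$ produces measures $\sigma^\pm=g^\pm\widetilde\mu$ with bounded densities, supported in $\bigcup_iB(x_i,r)\subset B(x_0,\epsilon)$, with $\sigma^\pm(\Omega)=\nu^\pm(\Omega)$. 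Refining the partition as $r\to0$ yields $\sigma^\pm\rightharpoonup\nu^\pm$, so the signed measures $\nu_r:=\sigma^+-\sigma^-=g\,\widetilde\mu$ satisfy $\nu_r(\Omega)=0$, $\supp\nu_r\subset B(x_0,\epsilon)\cap\supp\widetilde\mu$, and $\nu_r\rightharpoonup\nu$.

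Each $\nu_r$ is now admissible. Since $g=g^+-g^-$ is bounded, $\widetilde\mu+s\nu_r=(1+sg)\,\widetilde\mu$ is a probability measure for all $|s|\le\|g\|_\infty^{-1}$; and because the perturbation only redistributes $\widetilde\mu$-mass within $B(x_0,\epsilon)$, coupling the unchanged part along the diagonal and transporting the remainder inside $B(x_0,\epsilon)$ gives $d_\infty(\widetilde\mu,\widetilde\mu+s\nu_r)\le\operatorname{diam}B(x_0,\epsilon)$. Hence a sufficiently small multiple $s_r\nu_r$ belongs to $\V_0$ (if one insists on matching the $d_\infty$-radius exactly, it suffices to localize $\nu$ in a ball of half the radius, which is immaterial to the conclusion), so that $\nu_r\in\mathbb R\V_0$. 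Letting $r\to0$ and using $\nu_r\rightharpoonup\nu$ places $\nu$ in $\cl^*(\mathbb R\V_0)$, and Lemma~\ref{lem:quadratic} then delivers \eqref{eq:2nd_order} for $\nu$. The crux of the argument is precisely the upgrade from support containment to a genuine nonnegative, mass-preserving perturbation; the weak$^*$ density of bounded-density local $\widetilde\mu$-averages is what makes this work, and the remaining verifications are routine bookkeeping of masses and transport distances.
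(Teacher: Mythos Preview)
Your proof is correct and follows essentially the same route as the paper: build a $0$-symmetric family $\V_0$ of perturbations supported in $B(x_0,\epsilon)$ that keep $\widetilde\mu\pm\eta$ in $\mathcal P(\Omega)$, and then show $\nu\in\cl^*(\mathbb R\V_0)$ by approximating it with finite sums of the form $\sum_i c_i\,\widetilde\mu\restr{B_i}$ for small balls $B_i$ centered at points of $\supp\nu\subset\supp\widetilde\mu$. The only differences are cosmetic: the paper first passes through a discrete approximation of $\nu$ before replacing each Dirac mass by a normalized $\widetilde\mu$-average, whereas you partition $\supp\nu$ and average in one step; and you are more explicit than the paper about the $d_\infty$ estimate (redistribution inside $B(x_0,\epsilon)$ only yields $d_\infty\le 2\epsilon$, which your half-radius remark correctly addresses and which is indeed immaterial for the application in the main theorem).
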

\begin{proof}
    Fix the $ x_0\in \Omega $ and the $ \rho $-ball $ B(x_0,\epsilon) $ around it; denote this ball by $ B $. Note that $ B \cap \supp \widetilde\mu \neq \emptyset $.
    Let further $ \V_0 $ be a $ 0 $-symmetric set of signed Borel measures satisfying $ \nu_0(\Omega) = 0 $ and $ \supp \nu_0 \subset B $, such that $ \widetilde\mu + \V_0 \subset \mathcal P(\Omega) $. Note that $ \V_0 $ is nonempty, since it contains measures of the form 
    % $ c\mu\restr{B_1} - \mu\restr{B_2} $
    \[ 
        \nu_0 = \sum_i c_i \widetilde\mu\restr{B_i}\qquad  -1 \leq c_i \leq 1,\ \nu_0(B)=0,
    \]
    for closed disjoint balls $ B_i \subset B $ that have nonempty intersection with $ \supp \widetilde\mu $.
    In addition, for a $ \nu_0 \subset \V_0 $ there holds $ d_\infty(\widetilde\mu, \widetilde\mu+\nu_0) \leq \epsilon $. It follows that $ \widetilde \mu $ is a minimizer of $ I_F $ on the set $ \widetilde \mu + \V_0 $.

    To complete the proof using Lemma~\ref{lem:quadratic}, we have to show that any signed $ \nu $ as in the statement of the corollary is contained in $ \cl^* (\mathbb R \V_0) $.
    First observe that any signed measure $ \nu $ on $ \Omega $ can be approximated by discrete finitely-supported measures on $ \Omega $ in the weak$^*  $ topology. It therefore suffices to approximate a discrete measure $ \nu' $ supported inside $ B\cap \supp \widetilde\mu  $ in the weak$ ^* $ topology with a sequence of measures $ \nu_n \in \mathbb R \V_0 $.  

    We conclude by noting that such approximation of a finitely-supported $ \nu' = \sum_i \alpha_i\delta_{x_i} $ is given by
    \[
        \nu_n = \sum_{i} \frac{\alpha_i }{ \widetilde\mu(B_i) }{\widetilde\mu\restr{B_i}}, \qquad n \geq 1,
    \] 
    where $ B_i \subset B $ are closed disjoint $ \rho $-balls centered at $ x_i $, of radius at most $ 1/n $.

\end{proof}
\noindent Observe that the proof of Corollary~\ref{cor:apply_lem} applies also when the assumptions on $ \widetilde \mu $ in Lemma~\ref{lem:quadratic} are strengthened to being a local weak$ ^* $ minimizer. In this case, the assumption of $ \supp \nu \subset B(x_0,\epsilon) $ can be dropped. This results in the following
\begin{corollary}
    \label{cor:weakstar}
    If $ \widetilde \mu $ is a local weak$ ^* $ minimizer of $ I_F $ in $ \mathcal P(\Omega) $, then for any signed $ \nu $ with $ \nu(\Omega) = 0 $ and $ \supp \nu \subset \supp \widetilde\mu $, inequality \eqref{eq:2nd_order} holds.
\end{corollary}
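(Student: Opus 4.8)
The plan is to repeat the argument of Corollary~\ref{cor:apply_lem} almost verbatim, replacing the $ d_\infty $-ball by a weak$ ^* $ neighborhood of $ \widetilde\mu $, and to exploit the fact that this replacement removes the need to localize the supports of the admissible perturbations. Fix a weak$ ^* $ neighborhood $ U $ of $ \widetilde\mu $ on which $ I_F(\widetilde\mu) \leq I_F(\mu) $ for every $ \mu\in U\cap\mathcal P(\Omega) $.

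First I would construct a $ 0 $-symmetric set $ \V_0 $ to which Lemma~\ref{lem:quadratic} applies. Let $ \V_0 $ consist of all measures $ \nu_0 = \sum_i c_i\,\widetilde\mu\restr{B_i} $, where the $ B_i $ are finitely many disjoint closed balls meeting $ \supp\widetilde\mu $ --- now permitted to lie anywhere in $ \Omega $, rather than inside one fixed small ball --- with $ \nu_0(\Omega) = 0 $ and with the coefficients $ c_i $ taken small enough in absolute value that both $ \widetilde\mu\pm\nu_0 $ lie in $ U $. Each $ \widetilde\mu+\nu_0 $ is then a probability measure, since $ c_i\geq -1 $ forces nonnegativity and $ \nu_0(\Omega)=0 $ preserves mass, and it lies in $ U $ by construction; hence $ \widetilde\mu $ minimizes $ I_F $ on $ \widetilde\mu+\V_0 $. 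The crucial difference from Corollary~\ref{cor:apply_lem} is that the constraint keeping $ \widetilde\mu+\nu_0 $ inside the neighborhood is now a smallness condition on the $ c_i $, not a geometric localization of the $ B_i $: because the segment $ \gamma(t) = \widetilde\mu + t\nu_0 $ is weak$ ^* $-continuous --- unlike in the $ d_\infty $ topology, as noted for \eqref{eq:curve} --- scaling a perturbation down keeps it inside $ U $ no matter where its support sits.

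Next I would verify that $ \cl^*(\mathbb R\V_0) $ contains every signed $ \nu $ with $ \nu(\Omega)=0 $ and $ \supp\nu\subset\supp\widetilde\mu $. Since $ \mathbb R\V_0 $ is closed under arbitrary real scalings, it contains $ \sum_i c_i\,\widetilde\mu\restr{B_i} $ for every coefficient vector obeying $ \sum_i c_i\,\widetilde\mu(B_i)=0 $, the smallness restriction being recovered by scaling down before rescaling up. One then approximates $ \nu $ in the weak$ ^* $ topology by finitely-supported measures $ \nu' = \sum_i \alpha_i\delta_{x_i} $ with $ x_i\in\supp\widetilde\mu $ and $ \sum_i\alpha_i=0 $, and in turn approximates each such $ \nu' $ by $ \sum_i \frac{\alpha_i}{\widetilde\mu(B_i)}\,\widetilde\mu\restr{B_i} $ over balls $ B_i $ centered at $ x_i $ with radii shrinking to zero, exactly as at the end of the proof of Corollary~\ref{cor:apply_lem}. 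These approximants lie in $ \mathbb R\V_0 $ and converge weak$ ^* $ to $ \nu $, so $ \nu\in\cl^*(\mathbb R\V_0) $, and Lemma~\ref{lem:quadratic} delivers \eqref{eq:2nd_order}.

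I expect the only real subtlety to lie in the construction of $ \V_0 $, namely in reconciling the two competing demands on it: each perturbation must be weak$ ^* $-small enough to remain in $ U $, yet the span $ \mathbb R\V_0 $ must be rich enough to reach, after taking the weak$ ^* $ closure, every admissible $ \nu $. This is precisely where the weak$ ^* $ hypothesis does its work --- the weaker topology makes arbitrarily small perturbations available without any geometric localization --- and it is exactly what licenses dropping the hypothesis $ \supp\nu\subset B(x_0,\epsilon) $ present in Corollary~\ref{cor:apply_lem}.
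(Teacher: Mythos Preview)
Your proposal is correct and follows exactly the route the paper indicates: the paper simply remarks that the proof of Corollary~\ref{cor:apply_lem} goes through for a local weak$^*$ minimizer with the localization hypothesis $\supp\nu\subset B(x_0,\epsilon)$ dropped, and you have spelled out precisely why --- namely, weak$^*$ continuity of $t\mapsto\widetilde\mu+t\nu_0$ lets one shrink perturbations into any weak$^*$ neighborhood without constraining their supports. There is nothing to add.
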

In addition, when $ \widetilde \mu $ is a local minimizer in the weak$ ^* $ topology, its potential $ \int_\Omega F(\rho(x,y))\, d\widetilde \mu $ is constant on $ \supp \widetilde \mu $, which gives
\begin{equation}
    \label{eq:independence} 
    \iint_\Omega F(\rho(x,y)) \, d\widetilde \mu d\nu = 0, \qquad \supp \nu \subset \supp\widetilde\mu, \ \nu(\Omega) =0.
\end{equation}

We now apply equation~\eqref{eq:independence} to prove an interesting fact about weak$ ^* $ minimizers, which will not be used to obtain the main result below (see also \cite[Proposition 7.2]{bilykEnergy2019}).
\begin{proposition}
    If $ \mu_1, \mu_2 $ are local minimizers of $ I_F $ in the weak$ ^* $ topology and $ I_F(\mu_1) \neq I_F(\mu_2)  $, then it cannot hold that $ \supp \mu_1 \subset \supp\mu_2  $.
\end{proposition}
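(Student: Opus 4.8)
The plan is to argue by contradiction: assume $\supp\mu_1\subset\supp\mu_2$ while $I_F(\mu_1)\neq I_F(\mu_2)$, and then to show the two energies are in fact forced to coincide. Throughout I would write $U_2(x)=\int_\Omega F(\rho(x,y))\,d\mu_2(y)$ for the potential of $\mu_2$ and use the symmetric bilinear pairing $\iint F(\rho(x,y))\,d\mu\,d\nu$, so that $I_F(\mu)$ is its diagonal value; since all measures involved are local minimizers of finite energy, the relevant pairings are finite and the quadratic expansions below are legitimate.

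The first and crucial step is to evaluate the cross energy $\iint F(\rho(x,y))\,d\mu_1\,d\mu_2$. Since $\mu_2$ is a local weak$^*$ minimizer, the discussion leading to \eqref{eq:independence} gives that $U_2$ is constant on $\supp\mu_2$, and integrating $U_2$ against $\mu_2$ identifies that constant as $I_F(\mu_2)$. Because $\supp\mu_1\subset\supp\mu_2$, the potential $U_2$ equals this same constant $\mu_1$-almost everywhere, so integrating $U_2$ against $\mu_1$ yields $\iint F\,d\mu_1\,d\mu_2 = I_F(\mu_2)$. Note the asymmetry that makes the hypothesis relevant: it is essential that the \emph{larger} support belongs to $\mu_2$, so that the potential of $\mu_2$ is constant on all of $\supp\mu_1$.

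Next I would use the explicit segment \eqref{eq:curve}, $\gamma(t)=(1-t)\mu_1+t\mu_2$, which is weak$^*$-continuous with $\gamma(0)=\mu_1$ and $\gamma(1)=\mu_2$. Expanding the quadratic energy and substituting the cross-term identity gives, after simplification,
\[
    I_F(\gamma(t)) = I_F(\mu_2) + (1-t)^2\bigl(I_F(\mu_1)-I_F(\mu_2)\bigr),
\]
a parabola in $t$ with vertex at $t=1$. Local weak$^*$ minimality of $\mu_1$ applied at $t=0$ (legitimate, since $\gamma(t)\to\mu_1$ weak$^*$ as $t\to0+$) forces the right derivative $\frac{d}{dt}I_F(\gamma(t))\big|_{t=0^+}=-2\bigl(I_F(\mu_1)-I_F(\mu_2)\bigr)$ to be nonnegative, i.e. $I_F(\mu_1)\le I_F(\mu_2)$; local minimality of $\mu_2$ at $t=1$ (equivalently, Corollary~\ref{cor:weakstar} applied to $\mu_2$ with $\nu=\mu_1-\mu_2$, which has $\nu(\Omega)=0$ and $\supp\nu\subset\supp\mu_2$, and gives $\iint F\,d\nu\,d\nu=I_F(\mu_1)-I_F(\mu_2)\ge0$) forces the opposite inequality. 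Hence $I_F(\mu_1)=I_F(\mu_2)$, contradicting the hypothesis.

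I expect the only genuine obstacle to be the cross-term computation, namely justifying $\iint F\,d\mu_1\,d\mu_2=I_F(\mu_2)$ and confirming finiteness of all three pairings so that the quadratic expansion is valid; once this identity is in hand, the parabola computation and the two one-sided minimality conditions are routine. A secondary point to verify is that the weak$^*$-continuity of $\gamma$ noted after \eqref{eq:curve} indeed places $\gamma(t)$ inside the prescribed minimality neighborhoods of $\mu_1$ and $\mu_2$ for $t$ near $0$ and near $1$, respectively.
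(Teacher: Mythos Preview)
Your proposal is correct and follows essentially the same route as the paper: both arguments use the constancy of the potential of $\mu_2$ on $\supp\mu_2\supset\supp\mu_1$ to identify the cross term $\iint F\,d\mu_1\,d\mu_2=I_F(\mu_2)$, then obtain $I_F(\mu_1)\ge I_F(\mu_2)$ from Corollary~\ref{cor:weakstar} applied to $\nu=\mu_1-\mu_2$, and $I_F(\mu_1)\le I_F(\mu_2)$ from the local minimality of $\mu_1$ along the weak$^*$-continuous segment $\gamma(t)=(1-t)\mu_1+t\mu_2$. Your explicit parabola formula $I_F(\gamma(t))=I_F(\mu_2)+(1-t)^2\bigl(I_F(\mu_1)-I_F(\mu_2)\bigr)$ is a slightly cleaner way to package the same computation the paper carries out by expanding and rearranging.
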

\begin{proof}
    Assume by contradiction that $ \supp \mu_1 \subset \supp\mu_2  $. By Corollary~\ref{cor:weakstar}, $ I_F(\mu_1-\mu_2) \geq 0 $. Then
    \[ 
        I_F (\mu_1) = I_F\left(\mu_2 +  (\mu_1-\mu_2)\right) = I_F(\mu_2) + I_F(\mu_1-\mu_2)  \geq I_F(\mu_2),
    \]
    where we used \eqref{eq:independence} for the cross terms in the second equality. This gives $ I_F(\mu_1)\geq I_F(\mu_2) $. 

    Let us now show that $ I_F(\mu_1)\leq I_F(\mu_2) $, which will result in a contradiction. Consider the map $ \gamma(t) $ given by equation \eqref{eq:curve}. Since $ \mu_1 $ is a local minimizer and $ \gamma(t) $ is a continuous curve, there exists a $ t_0\in (0,1) $ such that $ I_F(\mu_1) \leq I_F(\gamma(t)) $ whenever $ 0\leq t\leq t_0 $. Since $ \int_\Omega F(\rho(x,y))\, d\mu_2 = I_F(\mu_2) $ on $ \supp \mu_2 $, distributing the terms in $I_F(\gamma(t))$ gives
    \[ 
        I_F (\mu_1) \leq I_F(\gamma(t)) = (1-t)^2I_F(\mu_1) + 2 t(1-t)I_F(\mu_2) + t^2I_F(\mu_2), \qquad 0\leq t \leq t_0,
    \]
    which after rearrangement becomes $ (2t-t^2)I_F(\mu_1)\leq (2t-t^2)I_F(\mu_2) $. This yields the desired inequality and completes the proof.
\end{proof}

The reason to introduce the smooth structure on $ \Omega $ in the following theorem is that geodesics and angles between them have to be defined. This restriction can be lifted by using Alexandrov angles, thereby extending the result of the theorem to general complete geodesic metric spaces with curvature bounded below, see Section~\ref{sec:discussion} below.

\begin{theorem}
    \label{thm:main}
    Let $ \Omega $ be a smooth Riemannian manifold. Assume that the continuous kernel $ F:[0,\infty)\to[0,\infty) $ is decreasing in a neighborhood of zero and satisfies
    \[
        F(t) = -(C+o(1))\,t^{\alpha}, \qquad t \to 0+
    \]
    for $ C>0 $ and $ \alpha > 2 $, and that $ \widetilde \mu $ is a local minimizer of $ I_F $ in the $ d_\infty $ topology. If a point $ y\in\Omega $ is such that $ \Omega $ has sectional curvature bounded below in a neighborhood $ U_y \ni y $, then $ y $ is not a limit point of $ \supp \widetilde \mu $.
\end{theorem}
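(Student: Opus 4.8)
The plan is to argue by contradiction: assume $y$ is a limit point of $S := \supp\widetilde\mu$ and construct a signed measure $\nu$, supported on finitely many tiny balls centered at points of $S$ inside an arbitrarily small neighborhood of $y$, with $\nu(\Omega)=0$ and $\iint_\Omega F(\rho(x,y))\,d\nu(x)d\nu(y) < 0$. Such $\nu$ satisfies the hypotheses of Corollary~\ref{cor:apply_lem} (its support sits in $B(y,\epsilon)\cap S$ once the neighborhood is small enough), so this contradicts \eqref{eq:2nd_order}. The heart of the matter is that, by \eqref{eq:weakly_rep}, at small scales the kernel behaves like $-C\,t^{\alpha}$ with $\alpha>2$, and $-t^{\alpha}$ fails to be conditionally positive definite for $\alpha>2$. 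Concretely, for three points the quadratic form $\nu\mapsto\iint\rho^{\alpha}\,d\nu\,d\nu$ restricted to $\{\nu(\Omega)=0\}$ fails to be negative semidefinite precisely when the powered distances $\rho(\cdot,\cdot)^{\alpha/2}$ violate the triangle inequality, as one checks by eliminating one weight and inspecting the reduced $2\times2$ form; the prototypical witness is the discrete second difference at three equally spaced collinear points at scale $h$, for which the form equals $(2^{\alpha+1}-8)\,h^{\alpha}>0$.

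Because $y\in S$ is a limit point, I would pick support points $p_k\to y$ with $p_k\ne y$ and, passing to a subsequence (using compactness of the unit sphere in $T_y\Omega$), arrange the initial unit tangents $v_k$ of the minimizing geodesics $yp_k$ to converge to some $v$, and the distances $s_k:=\rho(y,p_k)$ to satisfy $s_{k+1}<s_k/10$. Fixing a large index I set $q:=p_K$, $p:=p_{K+1}$, so that $s_1:=\rho(y,p)<s_2/10$ with $s_2:=\rho(y,q)$, while the angle $\theta$ between the two geodesics at $y$ is as small as desired. The configuration $y,p,q$ is then nearly collinear with $p$ between $y$ and $q$. To bound $d:=\rho(p,q)$ from above I would invoke Toponogov's hinge comparison, available since the sectional curvature is $\geq\kappa$ on $U_y$: $d$ is at most the hinge distance in the model space $M_\kappa$, which for a small configuration reduces to the Euclidean value $\sqrt{s_1^2+s_2^2-2s_1 s_2\cos\theta}$ up to higher-order curvature corrections. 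Writing $\lambda=s_1/s_2\in(0,1)$ and $\beta=\alpha/2>1$, the elementary inequality $(1-\lambda)^{\beta}+\lambda^{\beta}<1$ then yields, for $K$ large and $\theta$ small, the strict failure of the powered triangle inequality $\rho(p,q)^{\alpha/2}+\rho(y,p)^{\alpha/2}<\rho(y,q)^{\alpha/2}$ with a definite gap.

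I would then realize this as an admissible $\nu$: choose disjoint balls $B_1,B_2,B_3$ of radius $\delta$ around $y,p,q$, with $\delta$ much smaller than the pairwise distances, each of positive mass $m_i=\widetilde\mu(B_i)>0$ since the centers lie in $S$, and set $\nu=\sum_i (w_i/m_i)\,\widetilde\mu\restr{B_i}$, where the weights $w_i$ sum to zero and are chosen, as permitted by the failure of the powered triangle inequality and the resulting indefiniteness of the reduced form, so that $\sum_{i\ne j}w_i w_j\,\rho_{ij}^{\alpha}>0$ with $\rho_{ij}$ the distances between the centers. Expanding $\iint F\,d\nu\,d\nu$, the diagonal blocks are $O(\delta^{\alpha})$ (as $F(0)=0$), while the off-diagonal blocks equal $F(\rho_{ij})\,m_i m_j$ up to errors controlled by $\delta$ and the continuity of $F$; using \eqref{eq:weakly_rep} the total is $-C(1+o(1))\sum_{i\ne j}w_i w_j\,\rho_{ij}^{\alpha}<0$ once the scale is small enough. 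This contradicts Corollary~\ref{cor:apply_lem}, so $y$ cannot be a limit point of $\supp\widetilde\mu$.

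The main obstacle I anticipate is the geometric comparison step: correctly routing the lower curvature bound through Toponogov to bound $\rho(p,q)$ from \emph{above} — this is exactly where the hypothesis enters, and where naive Euclidean intuition could fail on a negatively curved space — together with the bookkeeping needed to ensure that the curvature corrections in $M_\kappa$, the ball-blurring errors of size $\delta$, and the $o(1)$ in \eqref{eq:weakly_rep} are all dominated by the definite positive gap in the powered triangle inequality. The point selection must also simultaneously guarantee a scale separation $s_1\ll s_2$ and a small angle $\theta$; I arrange this by first thinning to $v_k\to v$ with $s_{k+1}<s_k/10$ and only then taking the index $K$ large.
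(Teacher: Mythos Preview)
Your plan coincides with the paper's: argue by contradiction, extract a nearly collinear triple $y,y_l,y_k$ by making the tangent directions converge, use Toponogov (lower sectional curvature bound $\Rightarrow$ upper bound on the opposite side) to show $\rho(y,y_k)$ is the longest side, and then feed a zero-mean signed measure on the three points into Corollary~\ref{cor:apply_lem} to force the powered triangle inequality $\rho(y_k,y_l)^{\alpha/2}+\rho(y,y_l)^{\alpha/2}\geq\rho(y,y_k)^{\alpha/2}$, which fails with a gap for $\alpha>2$ when the configuration is nearly degenerate. Two minor simplifications relative to your write-up: the ball-smearing is unnecessary, since Corollary~\ref{cor:apply_lem} already yields \eqref{eq:2nd_order} for \emph{any} signed $\nu$ supported in $B(x_0,\epsilon)\cap\supp\widetilde\mu$, in particular for $\nu=t\delta_{y_k}+(1-t)\delta_y-\delta_{y_l}$; and rather than invoking indefiniteness of the reduced $2\times2$ form abstractly, the paper optimizes this one-parameter family over $t\in\mathbb R$ to obtain directly $\sqrt{F(s_{kl})/F(s_k)}+\sqrt{F(s_l)/F(s_k)}\geq 1$, which is exactly the powered triangle inequality once \eqref{eq:weakly_rep} is inserted.
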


\begin{proof}
Assume  by contradiction that the support of the minimizer $ \widetilde \mu $ contains a sequence $ \{ y_k \} \subset \Omega $, converging to $ y\in\Omega $, and $ \Omega $ has curvature greater than $ -1/K^2 $ in $ U_y $. Without loss of generality, $ U_y $ and all the $ y_k $ lie within the injectivity radius of the exponential map $ \exp_y $ \cite{jostRiemannian2011} from $ y $, so we can consider $ \exp_y^{-1}: U_y \to \mathbb R^d $, the inverse of the exponential map.% We can assume that $ U_y $ is mapped to a ball $ B(\boldsymbol 0, \rho_0) $ in $ \mathbb R^d $, with $ \exp_y^{-1}\, y = \boldsymbol 0 $. 

By passing to a subsequence if necessary, we will make the following assumptions: (i) the directions of vectors $ \exp_y^{-1} y_k $ converge to the direction $ n_y \in \mathbb R^d $:
\begin{equation}
    \label{eq:directions}
    \frac{\exp_y^{-1} y_k}{\|\exp_y^{-1} y_k \|} \longrightarrow n_y, \qquad k\to\infty; 
\end{equation}
(ii) the distances $ \rho(y, y_k) $  are strictly decreasing. Here $ \|\cdot\| $ denotes the Euclidean norm in $ \mathbb R^d $. 
% both projections $ \langle\exp_y^{-1} y_k, n_y\rangle $ onto $ n_y $ and

Let $ \Omega' $ be the hyperbolic plane of dimension $ \dim \Omega $ with constant sectional curvature $ -1/K^2 $, strictly smaller than that of $ \Omega $ in the neighborhood $ U_y $. Fix a point $ y'\in\Omega' $ and consider the map $ \psi = \exp_{y'} \circ \exp_y^{-1} $.
Both $ \exp_y^{-1} $ and $ \exp_{y'} $ are distance preserving on radial geodesics from $ y $, so $ \rho(y,x) = d'(y', x') $ with $ x\in U_y $, where the prime denotes the image of the corresponding point under $ \psi $. Similarly, the angles between geodesics passing through $ y $ are preserved under $ \psi $.
Consider a triple $ y $, $ y_k $, $ y_l $, $ l>k $. Let 
\[ 
    \begin{aligned}
        \rho(y_k,y_l) &= s_{kl}\\
        \rho(y_l, y)   &= s_l\\
        \rho(y,y_k)   &= s_k.
    \end{aligned}
\]
Because within the injectivity neighborhood of $ \exp_y $ geodesics are unique, there holds $ s_k \leq s_{kl} + s_l $, with equality if and only if
\[
    y_l \in \gamma(y, y_k),
\]
where $ \gamma(\cdot\,,\cdot) $ denotes the geodesic connecting the two points. 

Let us show that for $ k,l $ large enough, $ s_{kl} < s_k  $. Since by assumption (ii) $ s_l < s_k $,  this will imply that the geodesic $ \gamma(y,y_k) $ of length $ s_k $ is the longest side in the triangle $ \triangle y_kyy_l $.
Indeed, using the law of cosines in $\Omega'$ \cite[I.2.13]{bridsonMetric1999}, we have 
\addtocounter{equation}{1}
\begin{align*}
    \cosh \frac{d'(y_k', y_l')}{K} 
        &= \cosh \frac{d'(y', y_k')}{K} \cosh \frac{(y', y_l')}{K} - \sinh \frac{(y', y_k')}{K} \sinh \frac{(y', y_l')}{K} \cos(\angle y_k'y'y_l' ) \\
        &= \cosh \frac{s_k}{K} \cosh \frac{s_l}{K} - \sinh \frac{s_k}{K} \sinh \frac{s_l}{K} \cos(\angle y_kyy_l ) \\
        &= C_k C_l - \sqrt{C_k^2-1} \sqrt{C_l^2-1} \cos(\angle y_kyy_l )\\
        &= C_k (1+ (C_l-1)) - \sqrt{C_l-1} \cdot \sqrt{C_k^2-1} \sqrt{C_l+1} \cos(\angle y_kyy_l ), \tag{\the\value{equation}}\label{eq:long_side}
\end{align*}
where we write $ C_w = \cosh \frac{s_w}{K} $, $ w\in \{ k,l \} $ for brevity.
% \[ 
%     \begin{aligned}
%         \| z_k-z_l \|^2 &=  \| z_k \|^2 - 2\langle z_k, z_l \rangle + \| z_l \|^2 \\
%                         &= s_k^2 - 2\, s_k\, s_{l} \cos(\angle y_kyy_l ) + s_{l}^2,
%     \end{aligned}
% \]
% where $ z_w = \exp_y^{-1} y_w $, $ w \in \{ k,l \} $.
% Since all the three sides of $ \triangle y_kyy_l $ tend to zero when $ k\to \infty $, expanding $ \cosh $ and $ \sinh $ about zero gives
% \[
%     d'(y_k', y_l')^2 = s_k^2 + s_l^2 - 2 s_k s_l \cos(\angle y_kyy_l) + O(s_k^4 + s_l^4), \qquad (s_k,s_l) \to 0.
% \]
In view of \eqref{eq:directions}, 
\[
    \lim_{k\to\infty} \cos(\angle y_kyy_l ) = 1, \qquad l\geq k,
\]
it follows that the expression in \eqref{eq:long_side} is less than $ C_k $ for a fixed $ k $ and $ l $ sufficiently large, because for such $ l $, 
\[
    \sqrt{C_l-1} < \frac{1}{C_k} \cdot \sqrt{C_k^2-1} \sqrt{C_l+1} \cos(\angle y_kyy_l ).
\]
Since $ \cosh t $ is an increasing function for $ t\geq 0 $, it follows that $ d'(y_k', y_l') < s_k $. By construction, $ \psi(U_y) \subset \Omega' $ has lower sectional curvature than $  U_y \subset \Omega $, and so by the Toponogov's theorem \cite[p. 35]{cheegerComparison2008} there holds 
\[
    d'(y_k', y_l') \geq \rho(y_k, y_l) = s_{kl}, 
\]
which shows that $ s_{kl} < s_k $.

Having shown that $ \gamma(y, y_k) $ is the longest side in the geodesic triangle $ \triangle y_kyy_l \subset \Omega $, we use the construction from \cite{carrilloGeometry2017}, which implies that for any sufficiently small triangle in the support of a minimizer measure, the angle adjacent to the longest side cannot be too small. This will give a contradiction to \eqref{eq:directions}.

To establish a lower bound on the angle $ \angle y_kyy_l $, we argue as follows. By Corollary~\ref{cor:apply_lem}, Lemma~\ref{lem:quadratic} can be applied to signed measures supported on $ \supp \widetilde \mu $, if their supports can be contained in a sufficiently small ball. Thus, for $ s_k,s_{kl},s_l $ sufficiently small, equation \eqref{eq:2nd_order} can be applied with $ \nu = t\delta_{y_k} + (1-t)\delta_{y} - \delta_{y_l} $, giving
\[
    t(1-t) F(s_k) \geq tF(s_{kl}) + (1-t)F(s_l).
\]
Dividing both sides by $ F(s_k) $ yields
\[
    t(1-t) \leq ta + (1-t)b, \qquad t\in\mathbb R,
\]
for $ a = F(s_{kl})/F(s_k) $, $ b = F(s_l)/F(s_k) $. Optimizing over $ t \in \mathbb R $ shows that the above inequality is the strongest when $ t = - \frac{a-b-1}2 $, at the vertex of the parabola. Substituting this value for $ t $ produces $ b^2-2b(a+1)+(a-1)^2\leq 0 $. Since $ b $ has to lie to the right of the smaller root of the left-hand side, $ b\geq a-2\sqrt a +1 = (\sqrt a - 1)^2 $. Because $ F $ is decreasing in a neighborhood of zero and $ s_{kl} < s_k $, assuming again that $ s_k $ is sufficiently small gives $ a = F(s_{kl})/F(s_k) < 1 $. This implies
\[
    \sqrt a + \sqrt b \geq 1.
\]
Hence,
\begin{equation}
    \label{eq:region}
    \sqrt{-F(s_{kl})} + \sqrt{-F(s_l)} \geq \sqrt{-F(s_k)}.  
\end{equation} 
Taking into account the assumption \eqref{eq:weakly_rep} on the kernel $ F $, inequality \eqref{eq:region} can be rewritten as
\begin{equation}
    \label{eq:approx_region}
    (1+o(1))Cs_{kl}^{\alpha/2} + (1+o(1))Cs_l^{\alpha/2} \geq (1+o(1))Cs_k^{\alpha/2}, \qquad (s_k,s_{kl},s_l) \to 0.
\end{equation}
Consider the function
\[
    R(s_k, s_{kl}, s_l) := s_{kl}^{\alpha/2} + s_l^{\alpha/2} -s_k^{\alpha/2}.
\]
Because $ \alpha > 2 $, the equality $ s_k = s_{kl}+s_l $ implies
\[ 
    R(s_k, s_{kl}, s_l) < 0.
\]
By the above discussion about injectivity of the exponential map, this means that whenever $ y_l \in \gamma(y, y_k) $, there holds $ R(s_k, s_{kl}, s_l) < 0 $. Observe that $ R $ is homogeneous in its variables and rewrite it as
\[
    \begin{aligned}
        R(s_k, s_{kl}, s_l) &= s_k^{\alpha/2}\left(\left(\frac{s_{kl}}{s_k}\right)^{\alpha/2} + \left(\frac{s_l}{s_k}\right)^{\alpha/2} -1\right)\\
                       &=:s_k^{\alpha/2}\left(\beta_1^{\alpha/2} + \beta_2^{\alpha/2} -1\right).
\end{aligned}
\] 
By continuity of the above expression in $ \beta_i $, $ i=1,2 $, there exists an open neighborhood of $ \{ (\beta_1, \beta_2) : \beta_1+\beta_2 = 1 \} \subset \mathbb R^2 $, for which
\[ 
    R(s_k, s_k\beta_1, s_k\beta_2) < - P s_k^{\alpha/2}
\]
for a positive $ P $.
Denote this neighborhood by $ U $.  In view of how the ratios $ \beta_i $ are defined, there holds $ \beta_1+\beta_2 \geq 1 $ by the triangle inequality. Let $ c_U>0 $ be such that $ U\supset \{ (\beta_1, \beta_2) : 1\leq \beta_1+\beta_2 \leq  1+ c_U \} $.

Since $ s_k $ is the length of the longest side in the triangle $ \triangle y_kyy_l \subset \Omega $ when the angle $ \angle y_kyy_l $ is sufficiently small, for a fixed $ k $ and any sufficiently large $ l $ there holds $ s_l < c_U s_k $. Hence, for such $ k, l $ there holds
\[
    s_{kl} + s_l < (1+c_U)s_k,
\] 
and so, by the definition of $ c_U $, $R(s_k, s_k\beta_1, s_k\beta_2) < - P s_k^{\alpha/2}$. In view of \eqref{eq:approx_region}, the triple $ y, y_k,y_l $ then cannot belong to $ \supp \widetilde \mu $, because it would result in
\[
    - P s_k^{\alpha/2} + o(1)s_{kl}^{\alpha/2} + o(1)s_l^{\alpha/2} \geq o(1)s_k^{\alpha/2}, \qquad (s_k,s_{kl},s_l) \to 0,
\]
or, since $ s_k = \max\{ s_k, s_l, s_{kl} \} $,   
\[
    - P s_k^{\alpha/2} + o(1)s_{k}^{\alpha/2} \geq 0, \qquad s_k \to 0,
\]
a contradiction, since $ y, y_k, y_l $ were assumed to belong to $ \supp \widetilde \mu $. This finishes the proof of the theorem.
\end{proof}

\begin{corollary}
    For a closed compact smooth manifold $ \Omega $, local minimizers of the functional
    \[
    \iint_\Omega F(\rho(x,y)) \, d\mu(x) d\mu(y) \qquad \text{ s.t. } \mu\in\mathcal P(\Omega)
    \]
    in the $ d_\infty $ topology are discrete.
\end{corollary}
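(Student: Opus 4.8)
The plan is to deduce the corollary directly from Theorem~\ref{thm:main} by checking that its curvature hypothesis holds at \emph{every} point of $\Omega$, and then converting the resulting absence of limit points into finiteness via compactness. First I would record that a closed compact smooth Riemannian manifold has sectional curvature given by a continuous function on the bundle of tangent $2$-planes; since that bundle is compact, the sectional curvature is bounded, and in particular bounded below by some $-1/K^2$ on all of $\Omega$. Hence for each $y\in\Omega$ one may take the neighborhood $U_y$ required by Theorem~\ref{thm:main} to be any neighborhood of $y$ (indeed all of $\Omega$), so the theorem applies without exception.

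With the hypothesis verified at every point, I would invoke Theorem~\ref{thm:main} pointwise: for each $y\in\Omega$, the point $y$ is not a limit point of $\supp\widetilde\mu$. As this holds for every $y$, the set $\supp\widetilde\mu$ has no limit points in $\Omega$ whatsoever.

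It remains to upgrade ``no limit points'' to ``finite.'' The support of a measure is closed, so $\supp\widetilde\mu$ is a closed subset of the compact manifold $\Omega$ and is therefore itself compact. A compact set with no limit points must be finite: an infinite subset would contain a sequence of distinct points, which by compactness would have a convergent subsequence, and its limit would be an accumulation point of $\supp\widetilde\mu$, contradicting the previous paragraph. A probability measure with finite support is discrete, which is the assertion of the corollary.

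The only substantive step is the first one---observing that the compactness of $\Omega$ forces a uniform lower bound on sectional curvature, so that the local result of Theorem~\ref{thm:main} is available at every point; the remainder is the standard fact that in a compact space a closed set without accumulation points is finite. Accordingly I anticipate no real obstacle: the corollary is essentially the globalization of the pointwise statement of Theorem~\ref{thm:main}.
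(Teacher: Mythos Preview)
Your proposal is correct and follows exactly the paper's own route: the paper's justification is the one-line observation that sectional curvature is continuous and hence bounded below on a compact manifold, so Theorem~\ref{thm:main} applies at every point. You have simply made explicit the passage from ``no limit points'' to ``finite support'' via compactness, which the paper leaves tacit.
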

\noindent This is implied by the sectional curvature being continuous, and therefore bounded below on such manifold.

\section{Discussion}
\label{sec:discussion}
As mentioned in the previous section, the proof of Theorem~\ref{thm:main} applies without major changes to complete geodesic spaces with curvature bounded below. The angles between geodesics in this context are defined as Alexandrov angles.

The present result should be compared to the one of Bj\"orck \cite{Bjorck1956}, showing that for $ \delta > 2 $, the minimizers of 
\[
    \iint_\Omega -\|x-y\|^\delta \, d\mu(x) d\mu(y) \qquad \text{ s.t. } \mu\in\mathcal P(\Omega)
\]
on a compact $ \Omega\subset \mathbb R^d $ are supported on at most $ d+1 $ points. In a recent paper by Lim and McCann \cite{limIsodiametry2019,limGeometrical2020}, it is obtained that the global minimum of 
\[
    \iint_{\mathbb R^d} F_{\alpha,\beta}(\|x-y\|) \, d\mu(x) d\mu(y) \qquad \text{ s.t. } \mu\in\mathcal P(\mathbb R^d)
\]
is achieved on the equally weighted vertices of a regular simplex. Here $ F_{\alpha,\beta}(t) = \frac{t^\alpha}\alpha - \frac{t^\beta}\beta $, for $ \alpha \geq \beta  $ with $ \beta \geq 2 $ sufficiently large. In this case, the cardinality of the support of the global minimizer is therefore exactly $ d+1 $. Furthermore, when $ \Omega = \mathbb S^d $ with geodesic distance, it is known that tight spherical designs are the unique global minimizers for $ I_F $ with $ F(t) = |\cos(t)|^p $, for certain ranges of $ p $ \cite{bilykOptimal2019}. It would be interesting to find other cases where the cardinality of the supports of minimizers can be determined precisely.

It is useful to observe that the bound $ \alpha > 2 $ in Theorem~\ref{thm:main} is not sharp at least for some $ \Omega $, as demonstrated by Theorem~\ref{thm:bilyk}. 
\medskip

{\bf Acknowledgments. } A number of helpful conversations with Dmitriy Bilyk, Alexey Glazyrin, Ryan Matzke, and Josiah Park on the subject of this note are gratefully acknowledged.

\bibliographystyle{acm}
\bibliography{refs} 

\begin{thebibliography}{10}

\bibitem{bilykGeodesic2016}
{\sc Bilyk, D., and Dai, F.}
\newblock Geodesic distance {{Riesz}} energy on the sphere.
\newblock {\em ArXiv161208442 Math\/} (2016).

\bibitem{bilykStolarsky2016}
{\sc Bilyk, D., Dai, F., and Matzke, R.}
\newblock Stolarsky principle and energy optimization on the sphere.
\newblock {\em ArXiv161104420 Math\/} (2016).

\bibitem{bilykEnergy2019}
{\sc Bilyk, D., Glazyrin, A., Matzke, R., Park, J., and Vlasiuk, O.}
\newblock Energy on spheres and discreteness of minimizing measures.
\newblock {\em ArXiv190810354 Math-Ph\/} (2019).

\bibitem{bilykOptimal2019}
{\sc Bilyk, D., Glazyrin, A., Matzke, R., Park, J., and Vlasiuk, O.}
\newblock Optimal measures for p-frame energies on spheres.
\newblock {\em ArXiv190800885 Math\/} (2019).

\bibitem{Bjorck1956}
{\sc Bj{\"{o}}rck, G.}
\newblock {Distributions of positive mass, which maximize a certain generalized
  energy integral}.
\newblock {\em Ark. f{\"{o}}r Mat. 3}, 3 (feb 1956), 255--269.

\bibitem{bridsonMetric1999}
{\sc Bridson, M.~R., and Haefliger, A.}
\newblock {\em {Metric Spaces of Non-Positive Curvature}}.
\newblock {Springer Berlin Heidelberg}, Berlin, Heidelberg, 1999.
\newblock OCLC: 851367925.

\bibitem{carrilloGeometry2017}
{\sc Carrillo, J.~A., Figalli, A., and Patacchini, F.~S.}
\newblock Geometry of minimizers for the interaction energy with mildly
  repulsive potentials.
\newblock {\em Ann. Inst. Henri Poincare C Non Linear Anal. 34}, 5 (2017),
  1299--1308.

\bibitem{cheegerComparison2008}
{\sc Cheeger, J., and Ebin, D.~G.}
\newblock {\em Comparison Theorems in Riemannian Geometry}.
\newblock AMS Chelsea Publishing. {AMS Chelsea Publishing}, Providence, R.I,
  2008.
\newblock OCLC: ocn185095562.

\bibitem{jostRiemannian2011}
{\sc Jost, J.}
\newblock {\em Riemannian {{Geometry}} and {{Geometric Analysis}}}.
\newblock Universitext. {Springer Berlin Heidelberg}, Berlin, Heidelberg, 2011.

\bibitem{limIsodiametry2019}
{\sc Lim, T., and McCann, R.~J.}
\newblock Isodiametry, variance, and regular simplices from particle
  interactions.
\newblock {\em ArXiv190713593 Math-Ph Stat\/} (2019).

\bibitem{limGeometrical2020}
{\sc Lim, T., and McCann, R.~J.}
\newblock Geometrical bounds for the variance and recentered moments.
\newblock {\em ArXiv200111851 Math Stat\/} (2020).

\end{thebibliography}

\medskip
\noindent{\it Email address:\ }{\tt ovlasiuk@fsu.edu}
\end{document}